\documentclass{article}
\usepackage[T1]{fontenc}
\usepackage[utf8]{inputenc}
\usepackage{amsmath, amsthm, amssymb, mathtools}
\usepackage{todonotes}
\usepackage{thm-restate}
\usepackage{xcolor}
\usepackage{latexcolors}
\usepackage{cite}
\usepackage[colorlinks=true, linkcolor=blue!70!black, citecolor=orange!50!black, urlcolor=blue!70!black]{hyperref}
\usepackage[capitalize, nameinlink]{cleveref}
\usepackage{graphicx} 
\usepackage{authblk}
\usepackage{standalone}

\usepackage{tikz}
\usepackage[shortlabels]{enumitem}

\DeclareMathOperator{\td}{\mathsf{td}}
\DeclareMathOperator{\pn}{\mathsf{pn}}

\DeclareMathOperator{\pw}{\mathsf{pw}}

\newtheorem{theorem}{Theorem}

\newtheorem{proposition}[theorem]{Proposition}

\newtheorem{corollary}[theorem]{Corollary}
\newtheorem*{remark}{Remark}

\usepackage{authblk}

\title{Excluding paths and bicliques\thanks{M.~C.~was supported by NSF Grant DMS-2348219, NSF Grant CCF-2505100, AFOSR grant  FA9550-25-1-0275 and a Guggenheim Fellowship.
J.~C.~was supported by NSF Grant DMS-2348219, NSF Grant CCF-2505100, AFOSR grant  FA9550-25-1-0275, and by the Fonds de recherche du Qu\'{e}bec Grant 321124.
M.~K.~was supported in part by the Slovenian Research and Innovation Agency (research program P1-0383 and research project N1-0370).
M.~M.~was supported in part by the Slovenian Research and Innovation Agency (I0-0035, research program P1-0285 and research projects J1-60012, J1-70035, J1-70046, and N1-0370).}}

\author[1]{Maria Chudnovsky}
\author[1]{Julien Codsi}
\affil[1]{Princeton University, Princeton, NJ 08544, USA\\ \normalfont\texttt{\{mchudnov,jc3530\}@math.princeton.edu}}

\author[2]{Matjaž Krnc} 
\author[2]{Martin Milanič}
\affil[2]{FAMNIT and IAM, University of Primorska, Koper, Slovenia\\ \normalfont\texttt{\{matjaz.krnc,martin.milanic\}@upr.si}}

\date{\normalsize\textit{Dedicated to the memory of Vera T. Sós.}}

\begin{document}

\maketitle
\begin{abstract}
Classes of graphs excluding a path and a biclique as induced subgraphs are extensively studied in the literature.
One of the key structural results for such graphs is a Ramsey-type result due to Galvin, Rival, and Sands~(1982), establishing the existence of a function  $f$ bounding the maximum length of a path in terms of clique number $\omega$.
We improve the best known bound on $f$ to a function that is a singly exponential in $\omega^c$, for some constant $c$, which we show is best possible, up to optimizing $c$.

Our approach also has consequences for treedepth.
In particular, we show that, for graphs excluding a path and a biclique as induced subgraphs, treedepth is bounded by a polynomial function of clique number.
In turn, this result implies that every hereditary graph class that admits a function bounding treedepth of graphs in the class in terms of clique number, admits a polynomial such function.
This gives a treedepth analogue of a recent result on pathwidth due to Hajebi (2025).
\end{abstract}

\medskip
\noindent{\bf Keywords:} induced path, induced biclique, 
longest path, clique number,\\ treedepth, hereditary graph class, clique-polynomiality.

\medskip
\noindent{\bf MSC (2020):}
05C35, 
05C38, 
05C75, 
05C69, 
05C05. 

\section{Introduction}

Galvin, Rival, and Sands~\cite{MR678167} in 1982 and independently Atminas, Lozin, and Razgon~\cite{MR2990848} in 2012 proved that the only reason for a graph to have a long path is the presence of a long induced path, a large clique, or a large induced biclique.
Denoting by $\omega(G)$ the clique number of $G$ and by $\pn(G)$ the \emph{path number} of $G$, that is, the maximum number of vertices in a (not necessarily induced) path in $G$, this result can be equivalently stated as follows.
 
\begin{theorem}[Galvin, Rival, and Sands~\cite{MR678167}; Atminas, Lozin, and Razgon~\cite{MR2990848}]\label{path-number}
For every two positive integers $s$ and $t$, there exists a function $f\colon \mathbb{N}\to \mathbb{N}$ such that every $\{P_s, K_{t,t}\}$-free graph $G$ satisfies $\pn(G)\le f(\omega(G))$.
\end{theorem}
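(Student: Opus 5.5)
We argue via a DFS tree and induction on $\omega$, reducing to the statement that long paths force long induced paths unless a large clique or induced biclique is present. Fix $s,t$ and let $G$ be a connected $\{P_s,K_{t,t}\}$-free graph with $\omega(G)=\omega$; connectivity is harmless since a path lies in one component. If $\pn(G)$ is huge, then $G$ has a DFS (normal) spanning tree $T$ of huge depth or huge degree. The only delicate case is huge depth, because a root-to-leaf path of $T$ is a path of $G$ in which every edge of $G$ joins ancestor–descendant pairs. So the plan is to show: there is a function $h(\omega)$ such that every graph with a normal spanning tree of depth at least $h(\omega)$ contains $P_s$, $K_{t,t}$, or $K_{\omega+1}$. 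Note $\pn(G)\le$ (number of vertices of a longest path) and any path in $G$ has length at most $2^{\mathrm{depth}(T)}$-ish, since a normal tree bounds treedepth and treedepth $d$ bounds path number by $2^{d}-1$. So it suffices to bound treedepth, and I would target $\td(G)\le g(\omega)$ by induction on $\omega$.

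The key steps, in order, are as follows. First, take a long path $v_1,\dots,v_N$ in $G$, or a long root-to-leaf chain in a DFS tree. Second, run the standard shortcut argument: from $v_1$ greedily jump to the neighbour of largest index, producing an induced-ish ``shortcut path'' $v_{i_1}v_{i_2}\cdots$. If this chain has at least $s$ vertices and its chords are controlled, we would get $P_s$. Third, I would actually use the cleaner Ramsey route. Consider consecutive disjoint blocks $B_1,\dots,B_m$ of the path, each of length $L$. By induction on $\omega$, for each vertex $v$ its neighbourhood $N(v)$ induces a graph with clique number at most $\omega-1$, so $\pn(G[N(v)])\le f(\omega-1)$. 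Hence any long subpath avoiding edges to $v$ is constrained.

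The main obstacle is the bipartite Ramsey step. Since $P_s$ is excluded, the long path must have many chords. Applying the shortcut argument, each vertex must see far-away vertices of the path. I would then extract many pairwise disjoint subpaths $Q_1,\dots,Q_r$ with many edges between $Q_i$ and $Q_j$ for all $i<j$. By Kővári–Sós–Turán, which bounds the edges of a $K_{t,t}$-subgraph-free bipartite graph, many edges force a non-induced $K_{t,t}$ subgraph. One then has to clean it to an induced $K_{t,t}$ by applying Ramsey's theorem inside each side: each side of size $R(\omega+1,t)$ contains either $K_{\omega+1}$, which is impossible, or an independent $t$-set. This cleaning, together with ensuring density between parts (the place where $P_s$-freeness, via bounded induced path length in each connected piece, i.e.\ bounded diameter-type arguments, is used), is where I expect the difficulty. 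I would first try to prove the density claim by showing that in a $P_s$-free connected graph some vertex dominates a $1/g(\omega)$ fraction of a long path, and then iterate.
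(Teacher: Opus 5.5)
Your proposal has a genuine gap. The step that carries all the content is never proved: from a long path in a $\{P_s,K_{t,t}\}$-free graph, you need to produce a bipartite pair dense enough for K\H{o}v\'ari--S\'os--Tur\'an to give a $K_{T,T}$ subgraph with $T\ge R(\omega+1,t)$. You mark this yourself as ``where I expect the difficulty,'' and the route you suggest does not work as stated.

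A vertex adjacent to a $1/g(\omega)$ fraction of the path only gives a star. Its neighbours may be scattered along the path, so the inductive bound $\pn(G[N(v)])\le f(\omega-1)$ is not contradicted. Nor can you ``iterate'': after one step you hold a scattered set of path vertices rather than a long path, so your hypothesis no longer applies. Building $K_{T,T}$ this way means controlling common neighbourhoods of $T$ vertices, which is essentially the theorem itself.

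The remaining steps do not reduce the problem either:
\begin{enumerate}[(i)]
\item A root-to-leaf path of a DFS tree is itself a path of $G$, and $\td(G)\le\pn(G)\le 2^{\td(G)}-1$. So bounding DFS depth or treedepth is equivalent to the original statement, and ``huge degree'' plays no role.
\item The greedy shortcut path (always jump to the largest-index neighbour) is in fact induced. So $P_s$-freeness only tells you that fewer than $s$ jumps cover the path, i.e.\ that there are a few long chords. It says nothing about any pair of subpaths being dense.
\item The Ramsey cleaning of a $K_{T,T}$ subgraph into an induced $K_{t,t}$ is correct but routine.
\end{enumerate}

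The paper does not reprove this theorem; it cites it. Its quantitative versions come from a deep external input that plays exactly the role of your missing step.
\begin{enumerate}[(i)]
\item One option is \cref{thm:Hunter} of Hunter et al. A path on $n$ vertices in a $K_{r,r}$-subgraph-free graph forces an induced path of length $\Omega(\log\log n/(r\log\log\log n))$. The paper applies it with $r=(\omega(G)+1)^t$, after the same Ramsey cleaning you describe.
\item The other is the Chudnovsky--Hajebi--Spirkl theorem (\cref{CHS}) that $\{P_s,K_{\omega+1},K_{t,t}\}$-free graphs have bounded pathwidth, combined with \cref{thm:pwtopn}. In a connected $P_s$-free graph, take a shortest path from a vertex of the first bag to a vertex of the last bag of an optimal path decomposition. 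It has fewer than $s$ vertices and meets every bag, so deleting it lowers the pathwidth. Since any path of $G$ passes through at most $s$ components of the remainder, this gives $\pn(G)\le s^{\pw(G)}-1$.
\end{enumerate}
Without some such structural theorem, or a complete Ramsey argument in the style of Galvin--Rival--Sands, your outline is a list of ingredients rather than a proof.
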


Questions related to \Cref{path-number} were the subject of many recent papers, see Duron, Esperet, and Raymond~\cite{duron2024longinducedpathssparse} for an overview.
The best quantitative bound known up to now is a doubly exponential function of some power of the clique number.
This bound follows from the main result of Hunter, Milojević, Sudakov, and Tomon~\cite{Hunter2026} (see \Cref{sec:Hunter}).

We improve this result by showing that for graphs excluding a path and a biclique as induced subgraphs, the path number is bounded  above by a \textsl{singly exponential} function of some power of the clique number.

\begin{restatable}{theorem}{thmpathnumber}\label{thm:path number}
For every two positive integers $s$ and $t$ there exists a positive integer $c$ such that every $\{P_s,K_{t,t}\}$-free graph $G$ satisfies $\pn(G)\le 2^{\omega(G)^c}$.
\end{restatable}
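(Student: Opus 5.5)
Since $\pn(G)\le 2^{\td(G)}$ (a path on $n$ vertices has treedepth about $\log_2 n$), it suffices to prove that $\{P_s,K_{t,t}\}$-free graphs have treedepth at most $\omega(G)^c$. This is the polynomial treedepth bound announced in the abstract. I would prove it by induction on $s$, with $t$ fixed.

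Treedepth satisfies $\td(G)\le 1+\td(G-v)$ and is the maximum over components. So it suffices to find, in every connected $\{P_s,K_{t,t}\}$-free graph $G$, a set $X$ with these two properties:
\begin{itemize}
\item $|X|\le \omega^{c'}$, or more generally $X$ is a union of few cliques, or has polynomially bounded treedepth itself;
\item every component of $G-X$ is $P_{s-1}$-free.
\end{itemize}
Iterating then gives $\td(G)\le \sum_{i\le s}\omega^{c_i}$, which is polynomial in $\omega$.

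To find $X$, I take a vertex $v$ and its BFS layers $L_0=\{v\},L_1,\dots$. Since $G$ is $P_s$-free, there are at most $s-1$ layers. Any induced path with $s-1$ vertices that avoids a suitable separator can be extended by a shortest path back toward $v$. The key tool is a domination-type statement for $\{P_s,K_{t,t}\}$-free graphs: a connected such graph has a dominating set that induces a graph with bounded structure, in the spirit of the Camby--Schaudt connected dominating $P_{s-2}$-free subgraph or a dominating induced $P_{s-2}$/clique. Combined with $K_{t,t}$-freeness, I would show that the neighbourhood structure around the dominating object can be covered by $\mathrm{poly}(\omega)$ cliques. The mechanism is Ramsey with polynomial bounds in the $K_{t,t}$-free setting. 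By Kővári--Sós--Turán-type counting, or Gyárfás-path arguments, a $K_{t,t}$-free graph with no clique of size $\omega+1$ and no induced $P_s$ has no large "independent-ish" neighbourhood system. Concretely, for any vertex set $A$, the number of vertices with pairwise distinct neighbourhoods into an independent set is bounded polynomially. Removing the dominating set together with a polynomial number of cliques then leaves components in which every induced path is shorter, and the induction applies.

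Alternatively, I would run a Gyárfás-path argument directly. Grow an induced path $P=p_1p_2\dots$, each time removing $N[p_i]$ minus the component containing the remainder. The path has fewer than $s$ vertices. At each step, the removed parts are handled by induction applied to the subgraph $G[N(p_i)]$, which has clique number at most $\omega-1$. This yields a recursion
\[
\td(G)\le s\cdot\bigl(1+\max \td(G[N(p)])\bigr) \cdot(\text{poly})
\]
in $\omega$. I must make this recursion additive rather than multiplicative in $\omega$: a multiplicative recursion would give $s^{\omega}$, which is exponential in $\omega$ and therefore only a doubly exponential bound on $\pn$. The way around this is to use $K_{t,t}$-freeness to ensure that neighbourhoods of vertices far apart on the path interact in a bounded way.

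The main obstacle is exactly this step: achieving a polynomial rather than an exponential dependence on $\omega$ in the inductive decomposition. I would first try to prove a lemma that every connected $\{P_s,K_{t,t}\}$-free graph has a set $X$ of treedepth $O(\omega^{a})$ whose removal strictly decreases the longest induced path in every component. I would obtain $X$ as $N[D]$ for a bounded-size set $D$ with $|D|\le f(s)$, using the fact that $\td(G[N(u)])$ is controlled by the induction on $\omega$ only once, with no nested recursion.
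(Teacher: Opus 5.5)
\textbf{There is a genuine gap: the polynomial treedepth bound is never proved.} Your opening reduction is fine: since $\pn(G)\le 2^{\td(G)}-1$, a bound $\td(G)\le\omega(G)^c$ gives the statement, and the paper proves exactly this as \cref{thm:td-bdd-omega}. But the lemma you want to build that bound on is false in its main form.

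\textbf{Counterexample to the size version.} Take $T^+_k$, which is $\{P_4,K_{2,2}\}$-free with $\omega(T^+_k)=k$ (\cref{prop:dense-trees}). Suppose every component of $T^+_k-X$ is $P_3$-free, i.e.\ a clique. Either the root survives, in which case it is universal and all survivors form one chain of at most $k$ vertices. Or the root lies in $X$, and the same condition must hold in both copies of $T^+_{k-1}$. This forces $|X|\ge 2^{k-1}-1$. So no $X$ of size $\mathrm{poly}(\omega)$, nor a union of $\mathrm{poly}(\omega)$ cliques, exists.

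\textbf{The relaxations do not help.} Asking only that ``$G[X]$ has polynomial treedepth'' does not rescue the iteration, because $\td(G)\le\td(G[X])+\td(G-X)$ is false in general: for $K_{n,n}$ with $X$ one side, the right side is $1+1$ while $\td(K_{n,n})=n+1$. The version $X=N[D]$ relies on the same invalid inequality. Even if you had $\td(G[N[D]])\lesssim |D|\,(1+\max_{u}\td(G[N(u)]))$, induction on $\omega$ would give, writing $T(\omega)$ for the bound, $T(\omega)\gtrsim |D|\,T(\omega-1)$. That is the same multiplicative blow-up you correctly diagnose in the Gy\'arf\'as-path variant. Nothing in the proposal actually produces polynomial dependence on $\omega$.

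\textbf{How the paper gets polynomiality.} It does not generate it by a decomposition of its own; it imports it. \Cref{CHS} gives bounded pathwidth for graphs excluding $P_s$, $K_{t,t}$ and a fixed clique. Hajebi's theorem that pathwidth is clique-polynomial upgrades this to $\pw(G)\le\omega(G)^d$ (\cref{polypathwdith}).

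\textbf{The induction is on pathwidth, not on $s$ or $\omega$.} Take a connected $P_s$-free graph with an optimal path decomposition $(X_1,\dots,X_m)$, and a shortest path $P$ from a vertex of $X_1$ to a vertex of $X_m$. Then $P$ is induced, so it has fewer than $s$ vertices, and it meets every bag, so $\pw(G-P)\le\pw(G)-1$. Any path of $G$ enters at most $s$ components of $G-P$, so $\pn(G)\le s\,\pn(G-P)+s-1$. Hence $\pn(G)\le s^{\pw(G)}-1\le s^{\omega(G)^d}$ (\cref{thm:pwtopn}), which is at most $2^{\omega(G)^c}$ for suitable $c$. The factor $s$ per step is harmless precisely because the number of steps is $\pw(G)$, which is already polynomial in $\omega$. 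The same deletion gives $\td(G)\le(s-1)\pw(G)$ (\cref{thm:pwtotd}), which is the treedepth bound your reduction needed.
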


Except for optimizing the dependency of $c$ on the excluded path and biclique, this result is best possible.
Indeed, we construct examples showing that the exponential dependency on the clique number in \Cref{thm:path number} cannot be avoided.

Our approach has consequences for treedepth, a graph parameter that plays a central role in the graph sparsity theory of Ne\v{s}et\v{r}il and Ossona de Mendez~\cite{SparsityBook}.
We show that, for graphs excluding a path and a biclique as induced subgraphs, treedepth is bounded by a polynomial function of the clique number.

\begin{restatable}{theorem}{thmtdbddomega}\label{thm:td-bdd-omega}
    For every two positive integers $s$ and $t$ there exists a positive integer $c$ such that every $\{P_s,K_{t,t}\}$-free graph $G$ satisfies 
    $\td(G)\le \omega(G)^c$.
\end{restatable}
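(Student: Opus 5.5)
We argue by induction on $\omega(G)$ (with $s,t$ fixed), aiming at a recursion of the form $\td(G)\le \td$ of a graph with strictly smaller clique number plus a polynomial amount in $\omega$. The basic tool is the following: if $S\subseteq V(G)$ is such that every component of $G-S$ has clique number less than $\omega(G)$, then $\td(G)\le |S|+\max_C \td(C)$, where $C$ ranges over components of $G-S$. Iterating gives $\td(G)\le \sum_{k\le \omega}|S_k|$, so it suffices to find, in every connected $\{P_s,K_{t,t}\}$-free graph $G$ with $\omega(G)=\omega$, a set $S$ of size polynomial in $\omega$ whose removal lowers the clique number in every component. Note that we cannot simply take a hitting set of all maximum cliques: the number of maximum cliques may be large. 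We therefore use treedepth's own recursion more cleverly. Removing one vertex decreases treedepth of a connected graph by at most one. If removing a set $S$ disconnects $G$, we recurse on the components, and only the longest root-to-leaf chain of removals counts.

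The concrete plan is an induction on $s$ using the $P_s$-freeness, in the style of the classical bound $\td\le$ (function of longest path). Take a connected $G$ and a vertex $v$, and let $N=N(v)$. Since $G$ is $P_s$-free, BFS layers from $v$ number at most $s-1$. The key structural claim is a polynomial domination statement.

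\textbf{Domination claim.} There is a set $D$ of $\mathrm{poly}(\omega)$ vertices (depending on $s,t$) such that every component of $G-D$ is $\{P_{s-1},K_{t,t}\}$-free, or at least has strictly smaller ``induced path potential''.

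Given the claim, induction on $s$ yields $\td(G)\le |D|+\omega^{c_{s-1}}$, which is polynomial in $\omega$. To prove the claim, I would take a longest induced path $P=p_1\dots p_{s-1}$ and show that vertices which extend induced paths can be covered by few cliques or neighborhoods. Two Ramsey facts drive this. First, a $K_{t,t}$-free graph with $\omega(G)=\omega$ has no induced large complete bipartite structure. Second, the Kővári–Sós–Turán/Ramsey bound gives that a bipartite pattern between two sets of size $R(\omega+1,t)\le \omega^{t}$ forces either a clique of size $\omega+1$ or an independent set of size $t$ on each side, and hence an induced $K_{t,t}$ or a non-edge. This gives polynomial Ramsey bounds, polynomial in $\omega$ for fixed $t$, namely $R(\omega+1,t)=O(\omega^{t-1})$.

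The main obstacle is making the domination claim hold with polynomial size rather than exponential size. A naive approach of repeatedly taking neighborhoods blows up, because vertex neighborhoods are not bounded by $\omega$. I would first try to replace $N(v)$ by a \emph{maximal independent set} $I$ inside the relevant set. Its size is bounded polynomially because $t$ pairwise nonadjacent vertices together with $t$ common neighbors would form an induced $K_{t,t}$. Dominating sets built this way are bounded via $R(\omega+1,t)$ at each of the at most $s$ BFS levels. This gives $|D|\le (\omega^{t})^{O(s)}$, which is still polynomial in $\omega$. If this fails, the fallback is to use \cref{thm:path number}-style arguments restricted to the bounded-depth BFS tree, exploiting that treedepth is at most the height of a DFS tree, and to bound that height polynomially by compressing long monotone stretches into cliques.
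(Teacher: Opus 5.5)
Your argument fails at its central step: the domination claim is false, and so is the bound you propose for proving it.

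\textbf{The independent-set bound.} A maximal independent set inside $N(v)$ need not be small. Its vertices share only the single common neighbour $v$, not $t$ pairwise non-adjacent ones, so no induced $K_{t,t}$ arises. The star $K_{1,n}$ is $\{P_4,K_{2,2}\}$-free with $\omega=2$, yet $N(v)$ is independent of size $n$.

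\textbf{The domination claim itself.} No set $D$ of size polynomial in $\omega$ exists in general. Take $s=4$, $t=2$ and $G=T^+_k$, which is connected, $\{P_4,K_{2,2}\}$-free, and has $\omega(G)=k$ by \cref{prop:dense-trees}. Connected $P_3$-free graphs are cliques, so every component of $G-D$ would have to be a clique. Each of the $2^{k-2}$ vertices on the penultimate level of the underlying binary tree induces a $P_3$ together with its two leaf children. These triples are pairwise disjoint, and any triple avoiding $D$ puts an induced $P_3$ into a component of $G-D$. Hence $|D|\ge 2^{k-2}=2^{\omega(G)-2}$.

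\textbf{Your first reduction.} It fails for a similar reason. Take $m$ disjoint copies of $K_\omega$ with $\omega\ge 3$, each joined by a single edge to a common extra vertex. This is a connected $\{P_6,K_{2,2}\}$-free graph, and any $S$ leaving every component with clique number below $\omega$ has $|S|\ge m$. The DFS-tree fallback is not developed into an argument.

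In your recursion, deleting $D$ costs $|D|$ in depth. The length of the longest induced path is therefore not a quantity you can lower cheaply enough.

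What is missing is a potential with two properties: it is bounded polynomially in $\omega$, and it drops by one after deleting a bounded number of vertices from a connected graph. The paper uses pathwidth.
\begin{itemize}
\item By \cref{polypathwdith} (Chudnovsky--Hajebi--Spirkl combined with Hajebi's theorem that pathwidth is clique-polynomial), $\pw(G)\le\omega(G)^d$.
\item In a connected $P_s$-free graph, take a shortest path from a vertex of the first bag to a vertex of the last bag of an optimal path decomposition. It is induced, so it has at most $s-1$ vertices.
\item This path meets every bag, so deleting it lowers the pathwidth.
\item Placing the path as a chain above a treedepth decomposition of the rest and inducting gives $\td(G)\le(s-1)\pw(G)$ (\cref{thm:pwtotd}).
\end{itemize}
Since $\pw(G)+1\le\td(G)$, the statement you are proving implies the polynomial pathwidth bound, which is the deep input here. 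A self-contained proof would have to establish something of that strength, and the Ramsey-type counting in your sketch does not.
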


\Cref{thm:td-bdd-omega} in turn implies the treedepth analogue of a result on pathwidth due to Hajebi~\cite{hajebi2025polynomialboundspathwidth}.
To state this result, we need a couple of definitions.
A \emph{graph parameter} is a function mapping graphs to integers that does not distinguish between isomorphic graphs.
A graph class is \emph{hereditary} if it is closed under vertex deletion.
A graph parameter $\rho$ is said to be \emph{clique-polynomial} if the following holds: for every hereditary graph class $\mathcal G$, if there exists a function $f$ such that $\rho(G)\le f(\omega(G))$ for every graph $G\in \mathcal G$, then there exists a polynomial function $p$ such that $\rho(G)\le p(\omega(G))$ for every graph $G\in \mathcal G$.
Hajebi~\cite{hajebi2025polynomialboundspathwidth} showed that pathwidth is clique-polynomial.
We show the same property for treedepth.

\begin{restatable}{theorem}{thmpolyboundsfortreedepth}\label{thm:poly-bounds-for-treedepth}
For every hereditary graph class $\mathcal G$, if there exists a function $f$ such that $\td(G)\le f(\omega(G))$ for every graph $G\in \mathcal G$, then there exists a polynomial function $p$ such that $\td(G)\le p(\omega(G))$ for every graph $G\in \mathcal G$.
\end{restatable}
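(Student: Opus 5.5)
The plan is to reduce to \Cref{thm:td-bdd-omega} by showing that a hereditary class $\mathcal G$ with $\td$ bounded by a function of $\omega$ must exclude some path and some biclique as induced subgraphs. We cannot hope to exclude a fixed pair $P_s, K_{t,t}$ for the whole class, since cliques of growing size may occur. Instead we exploit the fact that paths and bicliques have small clique number, $\omega(P_s)\le 2$ and $\omega(K_{t,t})\le 2$, while their treedepth grows with size.

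\emph{Step 1: find the excluded pair.} Let $f$ be the given function. Every graph $G\in\mathcal G$ with $\omega(G)\le 2$ satisfies $\td(G)\le f(2)$. Next recall two standard facts: $\td(P_s)=\lceil\log_2(s+1)\rceil$, and $\td(K_{t,t})\ge t$. The second holds because deleting the root of an elimination forest removes one vertex, and $K_{t,t}$ minus fewer than $t$ vertices stays connected. Choose $s$ with $\lceil\log_2(s+1)\rceil>f(2)$ and set $t=f(2)+1$. If $P_s$ or $K_{t,t}$ were an induced subgraph of some $G\in\mathcal G$, heredity would put it in $\mathcal G$. Its clique number is at most $2$ and its treedepth exceeds $f(2)$, a contradiction. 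Hence every graph in $\mathcal G$ is $\{P_s,K_{t,t}\}$-free.

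\emph{Step 2: apply the polynomial bound.} By \Cref{thm:td-bdd-omega} there is a constant $c$, depending only on $s$ and $t$ and hence only on $f(2)$, such that $\td(G)\le\omega(G)^c$ for all $G\in\mathcal G$. This gives the polynomial $p(x)=x^c$. A graph with no vertices has treedepth $0$, so the case $\omega(G)=0$ needs no separate treatment. If one wants to avoid fuss about small $\omega$, take $p(x)=x^c+f(1)$.

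The only genuine difficulty is \Cref{thm:td-bdd-omega} itself, which we assume. The reduction is routine. The one point needing care is the treedepth lower bound for $K_{t,t}$ (or for paths), which follows from the elementary recursive characterization of treedepth.
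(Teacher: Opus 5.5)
Your proposal is correct and follows the paper's proof. Both show that $\mathcal G$ is $\{P_s,K_{t,t}\}$-free because these graphs have clique number $2$ but treedepth exceeding $f(2)$, and both then invoke \Cref{thm:td-bdd-omega}. The only difference is cosmetic: you lower-bound $\td(P_s)$ and $\td(K_{t,t})$ directly, whereas the paper makes $f$ non-decreasing, takes $s=t=2^{f(2)}$, and uses $\pn(G)\le 2^{\td(G)}-1$. (Your claim about all $G$ with $\omega(G)\le 2$ would likewise need $f$ non-decreasing, but you only apply it to graphs with clique number exactly $2$, taking $s\ge 2$, so nothing breaks.)
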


Let us mention that analogous results do not hold for treewidth and chromatic number, as shown by Chudnovsky and Trotignon~\cite{MR4970654} and by Bria\'nski, Davies, and Walczak~\cite{MR4707561}, respectively, but do hold for degeneracy in some cases, due to a result of Bourneuf, Buci\'c, Cook, and Davies~\cite{MR4808986} and Girão and Hunter \cite{10.1093/imrn/rnaf025}.
Indeed, they showed that for every graph $H$, there is a polynomial~$p$ such that every $K_{t,t}$-subgraph-free graph with no induced subdivision of $H$ has average degree at most $p(t)$. 
By Ramsey's theorem~\cite{MR1576401}, this implies that for every graph $H$, there is a polynomial $p$ such that every $K_{t,t}$-free graph with no induced subdivision of $H$ has average degree at most $p(\omega(G))$ and hence, degeneracy  at most $p(\omega(G))$. 

A sufficient condition for a graph parameter to be clique-polynomial is that it is \textsl{awesome}, which is a property based on the so-called \textsl{independence variants} of parameters (see Bešter Štorgel et al.~\cite{storgel2025awesomegraphparameters}).
While several parameters, including vertex cover number, feedback vertex set number, and odd cycle transversal number, are awesome, treedepth and pathwidth are not~\cite{storgel2025awesomegraphparameters}. Nonetheless, \Cref{thm:poly-bounds-for-treedepth} and the aforementioned result of Hajebi show that these two parameters satisfy a necessary condition for awesomeness, namely clique-polynomiality.

\medskip
Our main results, \Cref{thm:path number,thm:td-bdd-omega}, contribute to the list of results on classes of graphs excluding a path and a biclique as induced subgraphs.
Such classes were extensively studied in the literature, from both structural and algorithmic points of view.
For example, Kami\'nski and Pstrucha~\cite{MR3958245} showed that  graphs excluding a path and a biclique as induced subgraphs admit finitely many subgraph-minimal obstructions to $H$-list-colorability (for any fixed graph~$H$), which implies the existence of polynomial-time certifying algorithms for $H$-list-colorability.
Dallard et al.~\cite{dallard2024treewidth} conjectured that such graphs have bounded \textsl{tree-independence number}, that is, they admit tree decompositions whose bags induce subgraphs of bounded independence number.
Following a number of partial results 
(see~\cite{chudnovsky2025, chudnovsky2026, Hilaire2026,blazej2026treeindependencenumberp5freegraphs}), the conjecture was proved recently in a more general context by Hajebi and Spirkl~\cite{hajebi2026}.
This implies that all good algorithmic properties enjoyed by graph classes with bounded tree-independence number (see~\cite{MR4664382,LMMORS2026,DBLP:conf/soda/Yolov18}) hold for graphs excluding a path and a biclique as induced subgraphs.
This generalizes a number of previous results (see, e.g.,~\cite{MR3575027,DBLP:conf/wg/HusicM19,MR2009287}).

\medskip
The paper is structured as follows.
After collecting the necessary definitions and tools in~\cref{sec:prelim}, we explain in \cref{sec:Hunter} how to obtain a quantitative version of \cref{path-number} from the main result of Hunter et al.~\cite{Hunter2026}.
In~\cref{sec:path-decompositions}, we derive two auxiliary results regarding bounds on treedepth and path number of $P_s$-free graphs in terms of pathwidth.
In \cref{sec:Galvin}, we prove \cref{thm:path number}, while in \cref{sec:treedepth}, we prove \cref{thm:td-bdd-omega,thm:poly-bounds-for-treedepth}.

\section{Preliminaries}\label{sec:prelim}

We denote by $\mathbb N$ the set of all nonnegative integers.
All logarithms in this paper are binary. 
Throughout this paper, all graphs are finite, simple, and undirected. 
For a graph $G$, we denote its \emph{order}, that is, the number of vertices, by $n(G)$. 
For a positive integer $n$, we denote by $P_n$ and $K_n$ the $n$-vertex path graph and the $n$-vertex complete graph, respectively. 
In this paper, a \emph{biclique} $K_{n,n}$ refers to a balanced complete bipartite graph of order $2n$.

For a subset of vertices $U \subseteq V(G)$ or a subgraph $U$ of $G$, we write $G - U$ to denote the graph obtained from $G$ by deleting all vertices in $U$ and their incident edges. 
Given a family $\mathcal{F}$ of graphs, we say that a graph $G$ is \emph{$\mathcal{F}$-free} if no induced subgraph of $G$ is isomorphic to a member of $\mathcal{F}$; furthermore, for a graph $F$, we say that $G$ is \emph{$F$-free} if it is $\{F\}$-free. 

For integers $k, \ell$, we denote by $[k,\ell]$ the set of integers greater than or equal to $k$ not exceeding $\ell$, and set $[k]=  [1,k]$.

A \emph{Hamiltonian path} in a graph $G$ is a path that visits every vertex of $G$. 
A graph $G$ is \emph{traceable} if it admits a Hamiltonian path.

A \emph{path decomposition} of a graph $G$ is a sequence $(X_1, \dots, X_r)$ of subsets of $V(G)$ called \emph{bags} satisfying the following two 
properties: 
\begin{enumerate}[(i)]
    \item for every edge $uv \in E(G)$, there exists an index $i \in [r]$ such that $\{u, v\} \subseteq X_i$, and \label{pw:prop-ii}
    \item for every $v\in V(G)$ there exist integers $k,\ell\in [r]$ with $k\le \ell$ such that for all $j\in [r]$, we have
    $v\in X_j$ if and only if $j\in [k,\ell]$.
    \label{pw:prop-iii}
\end{enumerate}
The \emph{width} of a path decomposition is defined as $\max_{1 \le i \le r} |X_i| - 1$.
The \emph{pathwidth} of $G$, denoted by $\pw(G)$, is the minimum width over all possible path decompositions of $G$.

A \emph{rooted tree} is a tree $T$ together with a distinguished vertex $r \in V(T)$ called the \emph{root}. 
A \emph{rooted forest} is a disjoint union of rooted trees. 
The \emph{transitive closure} of a rooted forest $F$ is the graph with vertex set $V(F)$ where two vertices are adjacent if and only if one is an ancestor of the other in $F$. 
A \emph{treedepth decomposition} of a graph $G$ is a rooted forest $F$ on the vertex set $V(G)$ such that 
$G$ is a subgraph of the transitive closure of $F$. 
The \emph{depth} of a rooted forest $F$ is the maximum number of vertices on any root-to-leaf path in $F$. The \emph{treedepth} of $G$, denoted by $\td(G)$, is the minimum depth over all treedepth decompositions of $G$.

\subsection{Tools}

For easier reference, we enumerate here several standard (folklore) facts that we will use, sometimes implicitly, throughout the paper (see, e.g., \cite{SparsityBook,MR1647486}).

\begin{enumerate}[label=\textbf{Fact \arabic*.}, ref=Fact \arabic*, leftmargin=*]
    \item  Let $G$ be a graph and $C_1, \dots, C_k$ its connected components. 
    Then $\pw(G) = \max_{1 \le i \le k} \pw(C_i)$.
    Similarly, $\td(G) = \max_{1 \le i \le k} \td(C_i)$ and $\pn(G) = \max_{1\le i\le k}\pn(C_i)$. \label{obs:components}
      \item Path number, pathwidth, treedepth, and clique number are all monotone under subgraphs.\label{fct:monotone} 
     \item Let $G$ be a graph containing an edge, and let $I$ be the set of isolated vertices in $G$. Then $\pn(G)=\pn(G-I)$, $\td(G)=\td(G-I)$, and $\pw(G)=\pw(G-I)$. \label{fct:ignoreIsolates}
    \item\label{fct:subgraph-interval} Let $G$ be a graph with a path decomposition $(X_1,\dots,X_r)$, and let $H$ be a connected subgraph of $G$. 
    Then there exist integers $k,\ell\in [r]$ with $\ell\le k$ such that for all $j\in [r]$, the bag $X_j$ intersects $H$ if and only if $j\in [\ell,k]$.
    \item \label{fct:clique-in-bag} For every graph $G$, every path decomposition $(X_1, \dots, X_r)$ of $G$,
    and every clique $K$ of $G$, there exists  $i \in [r]$ such that $K \subseteq X_i$.
  
\item \label{treedepth-path-number}
Let $G$ be a graph.
Then 
\[\omega(G)\le \pw(G)+1\le \td(G) \le \pn(G)\le 2^{\td(G)}-1\,.\]
\end{enumerate}

The following theorem is an immediate consequence of~\cite[Theorem 2.1]{TW18}.

\begin{theorem}[{Chudnovsky, Hajebi, and Spirkl~\cite{TW18}}] \label{CHS}
For all  positive integers $r$, $s$,  and $t$, there exists an integer $k$ such that every $\{P_r, K_s, K_{t,t}\}$-free graph has pathwidth at most $k$.
\end{theorem}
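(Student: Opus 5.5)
The plan is to derive this statement directly from \Cref{path-number} together with \ref{treedepth-path-number}, so that no new structural argument is needed; the reference to \cite[Theorem 2.1]{TW18} then serves as an alternative source.

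Fix positive integers $r$, $s$, and $t$, and let $G$ be a $\{P_r,K_s,K_{t,t}\}$-free graph. Since $G$ is $K_s$-free, we have $\omega(G)\le s-1$. Apply \Cref{path-number} with the pair $(r,t)$ to obtain a function $f$ such that every $\{P_r,K_{t,t}\}$-free graph $H$ satisfies $\pn(H)\le f(\omega(H))$. The function $f$ need not be monotone, so set $k' = \max_{0\le w\le s-1} f(w)$, which is a constant depending only on $r$, $s$, and $t$. Then $\pn(G)\le f(\omega(G))\le k'$. By the chain of inequalities in \ref{treedepth-path-number}, $\pw(G)+1\le \td(G)\le \pn(G)\le k'$, and hence $\pw(G)\le k'-1$. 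Taking $k=k'-1$, or $k=0$ in degenerate cases such as the empty graph, proves the statement.

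The second route, via \cite{TW18}, goes as follows. The cited Theorem 2.1 is stated, in the form I expect, for graphs excluding an induced path together with a biclique as a \emph{subgraph}. To pass from induced to subgraph exclusion, take $T=R(s,t)$, the Ramsey number. If $G$ contained $K_{T,T}$ as a subgraph, each of its two sides would contain either a clique on $s$ vertices or an independent set on $t$ vertices. Since $G$ is $K_s$-free, both sides contain independent sets of size $t$, and these two sets together induce $K_{t,t}$, a contradiction. So $G$ is $K_{T,T}$-subgraph-free, and the cited theorem gives bounded pathwidth.

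There is no real obstacle in either route. The only points requiring care are that the function in \Cref{path-number} must be maximised over the finitely many admissible values of $\omega(G)$, and that the inequality $\pw(G)+1\le\pn(G)$ is exactly the part of \ref{treedepth-path-number} being used.
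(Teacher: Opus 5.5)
Your first route is correct and is essentially the paper's own alternative justification. The paper gives no proof beyond citing \cite[Theorem 2.1]{TW18}, and it remarks that the statement also follows from \Cref{path-number} combined with Robertson and Seymour's excluding-a-forest theorem \cite{GM1}. That second derivation is yours, with one change: where the paper uses \cite{GM1} to pass from bounded path number to bounded pathwidth, you use the chain $\pw(G)+1\le\td(G)\le\pn(G)$ from \ref{treedepth-path-number}. This chain holds because a DFS forest is a treedepth decomposition whose root-to-leaf paths are paths of $G$.

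The substitution is legitimate and more elementary. For an excluded path the Graph Minors machinery is unnecessary; \cite{GM1} handles arbitrary excluded forests. Your version also stays within the tools the paper already lists and gives the explicit bound $\pw(G)\le \pn(G)-1$, so one can take $k=\max\bigl(\max_{0\le w\le s-1} f(w)-1,\,0\bigr)$.

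Your second route is not needed. The Ramsey step that turns $K_s$-freeness plus induced-$K_{t,t}$-freeness into $K_{R(s,t),R(s,t)}$-subgraph-freeness is correct. However, it rests on your guess about the exact form of \cite[Theorem 2.1]{TW18}, so it should not be offered as a proof on its own.
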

\Cref{CHS} also follows immediately from \cite{MR2990848,MR678167} and \cite{GM1}.
\begin{theorem}[Hajebi~\cite{hajebi2025polynomialboundspathwidth}]\label{thm:hajebi1}
For every hereditary graph class $\mathcal G$, if there exists a function $f$ such that $\pw(G)\le f(\omega(G))$ for every graph $G\in \mathcal G$, then there exists a polynomial function $p$ such that $\pw(G)\le p(\omega(G))$ for every graph $G\in \mathcal G$.
\end{theorem}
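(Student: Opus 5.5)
The plan is to turn the qualitative hypothesis into a finite list of excluded induced subgraphs of bounded clique number. Then I would prove a polynomial bound for graphs excluding that list.

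\textbf{Step 1: obstructions.} Suppose $\pw(G)\le f(\omega(G))$ on the hereditary class $\mathcal G$. Every graph with $\omega\le 3$ in $\mathcal G$ then has pathwidth at most $k:=\max\{f(1),f(2),f(3)\}$. Several families have clique number at most $3$ and unbounded pathwidth: bicliques $K_{t,t}$, subdivisions of the complete binary tree $T_h$ (pathwidth grows linearly in $h$), and line graphs of such subdivisions. Hence there is a fixed $h$ (depending only on $k$) such that $\mathcal G$ excludes all of the following as induced subgraphs:
\begin{itemize}
\item $K_{t,t}$ for $t=k+1$;
\item every subdivision of $T_h$;
\item every line graph of a subdivision of $T_h$.
\end{itemize}
So it suffices to prove that every graph $G$ excluding these has $\pw(G)\le p(\omega(G))$ for some polynomial $p$ depending only on $h$ and $t$.

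\textbf{Step 2: from pathwidth to a tree minor.} By the excluded-forest theorem, $\pw(G)\ge |V(T_H)|-1$ forces $T_H$ as a minor. Such a minor has maximum degree $3$, so it can be realized as a topological minor, i.e.\ a subdivision of $T_H$ as a (not necessarily induced) subgraph. I would aim to show that if $\pw(G)> \omega(G)^{c}$ then $G$ contains a subdivision of $T_H$ with $H$ large compared to $\log$ of quantities polynomial in $\omega$. Concretely, we may afford $2^{H}$ to be polynomially larger than $\omega^{O(1)}$ losses.

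\textbf{Step 3: cleaning the subdivision into an induced obstruction.} The available tool is the polynomial degeneracy theorem of Bourneuf, Buci\'c, Cook, and Davies together with Ramsey. Since $G$ has no induced subdivision of $T_h$ and no $K_{t,t}$, every induced subgraph of $G$ has degeneracy at most $q(\omega(G))$ for a polynomial $q$. With bounded degeneracy, the chords of the tree subdivision are sparse. I would then select a sub-binary-tree of height $h'$ whose branch paths have only local chords, by a greedy or probabilistic pruning that halves the height roughly $\log q(\omega)$ times. Chords near branch vertices are then resolved by the standard case analysis, which yields either an induced subdivision of $T_h$ or the line graph of one. This requires $H\ge h\cdot O(\log q(\omega))$, i.e.\ $|V(T_H)|\le \omega^{O(h)}$, which is polynomial in $\omega$. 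This gives the desired polynomial $p$.

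\textbf{Main obstacle.} The hard part is Step 3: making the passage from a subgraph subdivision to an induced one cost only a logarithmic amount of tree height, rather than the Ramsey-type exponential loss of the qualitative proofs. I would first try the approach above, using degeneracy to bound how many branch paths a single vertex can see, then pass to a large sub-tree avoiding long-range chords via a $2$-colouring or averaging argument on levels. If this fails, the fallback is an induction on $\omega$: bound the pathwidth of each vertex neighbourhood by $p(\omega-1)$, then glue along a path decomposition of bounded "separation number" coming from the excluded $K_{t,t}$ and trees.
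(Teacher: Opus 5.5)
The paper does not prove this theorem: it is quoted from Hajebi and used only as a black box (to get \Cref{polypathwdith}), so your argument has to stand on its own. Your Step~1 is fine. Heredity and $\pw\le f(\omega)$ do force the exclusion of $K_{t,t}$, of all subdivisions of $T_h$ and of their line graphs (and you may take $h\ge 3$), and a polynomial bound for that class would suffice.

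The gap is in Step~3, whose cleaning lemma is false. Take $T^+_k$ from \Cref{prop:dense-trees}. It contains the complete binary tree of depth $k$ as a spanning subgraph and has $\omega(T^+_k)=k$. It is $\{P_4,K_{2,2}\}$-free. For $h\ge 3$, every subdivision of $T_h$ and every line graph of such a subdivision contains an induced $P_4$, and $K_{t,t}$ with $t\ge 2$ contains $K_{2,2}$. So $T^+_k$ lies in your reduced class, and your degeneracy bound $q(\omega)$ applies to it. For large $k$ it contains a subdivision of $T_H$ with $H=k\ge h\cdot O(\log q(k))$, yet it contains none of the induced obstructions. (It is no counterexample to the theorem itself, since $\pw(T^+_k)=k-1$.) No pruning or chord case analysis can get around this.

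The example also shows that this route cannot be repaired. Suppose an argument keeps only a non-induced subdivided binary tree from the hypothesis $\pw(G)>\omega(G)^c$, plus class-wide sparsity such as degeneracy. Then it needs tree height at least about $\omega$, i.e.\ pathwidth about $2^{\omega}$. The excluded-forest theorem only yields height about $c\log\omega$. So Step~2 discards exactly the information you need, and the best this strategy can give is an exponential bound.

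Your own accounting points the same way. Halving the height $\log q(\omega)$ times divides it by $q(\omega)$, so you would need $H\ge h\,q(\omega)$, i.e.\ $\pw\gtrsim 2^{h q(\omega)}$.

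The fallback induction on $\omega$ does not close the gap either. A gluing step that multiplies the bound by a constant gives only $p(\omega)\le C\,p(\omega-1)$, which is again exponential. For a polynomial you would need something like an additive polynomial loss per step, or a constant-factor loss when $\omega$ is halved. The proposal supplies no mechanism for either. The missing ingredient is a way to use the large pathwidth itself, with only polynomial losses in $\omega$, to find the induced obstruction. That is precisely the content of Hajebi's theorem.
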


Together \cref{CHS,thm:hajebi1} imply:

\begin{corollary} \label{polypathwdith}
For every two positive integers $s$ and $t$ there exists a positive integer $c$ such that the following holds.
Every $\{P_s,K_{t,t}\}$-free graph $G$ satisfies $\pw(G) \leq {\omega(G)}^c$.
\end{corollary}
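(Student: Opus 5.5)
This statement follows immediately by chaining \Cref{CHS} with \Cref{thm:hajebi1}, applied to the hereditary class $\mathcal G_{s,t}$ of all $\{P_s,K_{t,t}\}$-free graphs.

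The first step is to check that $\mathcal G_{s,t}$ is hereditary and that pathwidth is bounded by some function of clique number on it. Heredity is clear: deleting vertices cannot create an induced $P_s$ or $K_{t,t}$. For the bound, fix $G\in\mathcal G_{s,t}$ and put $\omega=\omega(G)$. Then $G$ is $K_{\omega+1}$-free, so it is $\{P_s,K_{\omega+1},K_{t,t}\}$-free. \Cref{CHS}, applied with $r=s$ and with $\omega+1$ in the role of its parameter $s$, gives an integer $k(\omega)$, depending only on $s$, $t$ and $\omega$, such that $\pw(G)\le k(\omega)$. So $f(\omega):=k(\omega)$ is a function with $\pw(G)\le f(\omega(G))$ for all $G\in\mathcal G_{s,t}$.

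The second step is to apply \Cref{thm:hajebi1}, which yields a polynomial $p$ with $\pw(G)\le p(\omega(G))$ for every $G\in\mathcal G_{s,t}$.

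The last step is to turn $p$ into a bound of the form $\omega^c$. Since $\omega(G)\ge 1$ whenever $G$ is nonempty, for $\omega\ge 2$ any polynomial of degree $d$ with coefficients bounded by $M$ satisfies $p(\omega)\le (d+1)M\omega^d\le \omega^c$ once $c$ is large enough. The case $\omega(G)=1$ needs separate handling, because there the target bound is $\omega^c=1$. Such a graph is edgeless, so $\pw(G)=0\le 1$, and the bound holds for any $c$. The null graph is trivial.

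The only point needing care is this small-$\omega$ bookkeeping. There is no substantive obstacle, since both ingredients are quoted results.
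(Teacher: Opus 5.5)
Your proposal is correct and follows the paper's proof. Like the paper, you apply \Cref{CHS} to get a bound on pathwidth in terms of clique number on the hereditary class of $\{P_s,K_{t,t}\}$-free graphs, upgrade it to a polynomial via \Cref{thm:hajebi1}, dispose of $\omega(G)=1$ by noting $\pw(G)=0$, and absorb the polynomial into $\omega^c$ for $\omega\ge 2$.
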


\begin{proof}
Fix positive integers $s$ and $t$, and let $\mathcal G$ be $\{P_s,K_{t,t}\}$-free graphs.
\cref{CHS} implies that the pathwidth of every graph in $\mathcal G$ is bounded by some function of its clique number.
By \cref{thm:hajebi1}, the pathwidth of every graph in $\mathcal G$ is bounded by a polynomial function $p$ of its clique number. 
Since a graph with $\omega(G) = 1$ satisfies $\pw(G) = 0$, it remains to justify that there exists some positive integer $c$ such that $p(x)\le x^c$ for all $x\ge 2$.
Indeed, after dropping the terms with negative coefficients, we observe that $p$ is upper-bounded by a polynomial of some degree $d$ in which all nonzero terms have the same positive coefficient $a$.
Any such polynomial is upper bounded by a monomial of the form $ax^{d+1}$, which in turn is bounded from above by $x^c$ for all $x\ge 2$, where $c = d+1+\lceil \log a\rceil$.
\end{proof}

\subsection{Transitive closures of binary trees}

For a positive integer $k$, denote by $T^+_k$ the transitive closure of a complete binary tree of depth $k$. 
See \cref{fig:dense-tree} for an example.
The family $\{T^+_k\}_{k>0}$ will serve as a tightness example for several of our claims. 

\begin{figure}[htbp]
    \centering
\begin{tikzpicture}[
    node style/.style={circle, draw=black, fill=white, inner sep=0pt, minimum size=6pt},
    tree edge/.style={black, thick},
    clique edge/.style={blue!40!white, thin} 
]

\def\totalwidth{13}
\def\levelsep{1.3}

\foreach \l in {0,1,2,3} {
    \pgfmathtruncatemacro{\numNodes}{pow(2,\l)}
    \pgfmathtruncatemacro{\lastNode}{\numNodes-1}
    \foreach \i in {0,...,\lastNode} {
        \pgfmathsetmacro{\xpos}{\totalwidth * (\i + 0.5) / \numNodes}
        \pgfmathsetmacro{\ypos}{-\l * \levelsep}
        
        \node[node style] (N\l_\i) at (\xpos, \ypos) {};
    }
}

\foreach \l in {0,1,2} {
    \pgfmathtruncatemacro{\numNodes}{pow(2,\l)}
    \pgfmathtruncatemacro{\lastNode}{\numNodes-1}
    \foreach \i in {0,...,\lastNode} {
        \pgfmathtruncatemacro{\childA}{2*\i}
        \pgfmathtruncatemacro{\childB}{2*\i + 1}
        \pgfmathtruncatemacro{\nextLevel}{\l+1}
        
        \draw[tree edge] (N\l_\i) -- (N\nextLevel_\childA);
        \draw[tree edge] (N\l_\i) -- (N\nextLevel_\childB);
    }
}

\foreach \i in {0, 1} { \draw[clique edge] (N0_0) to[bend right=-5] (N2_\i); }
\foreach \i in {2, 3} { \draw[clique edge] (N0_0) to[bend left=-5]  (N2_\i); }

\foreach \i in {0, 1, 2, 3} { \draw[clique edge] (N0_0) to[bend right=-10] (N3_\i); }
\foreach \i in {4, 5, 6, 7} { \draw[clique edge] (N0_0) to[bend left=-10]  (N3_\i); }

\foreach \i in {0, 1} { \draw[clique edge] (N1_0) to[bend right=-15] (N3_\i); }
\foreach \i in {2, 3} { \draw[clique edge] (N1_0) to[bend left=-15]  (N3_\i); }

\foreach \i in {4, 5} { \draw[clique edge] (N1_1) to[bend right=-15] (N3_\i); }
\foreach \i in {6, 7} { \draw[clique edge] (N1_1) to[bend left=-15]  (N3_\i); }

\end{tikzpicture}
    \caption{The graph $T^+_k$ is the transitive closure of a complete binary tree of depth $k$. 
    The figure depicts~$T^+_4$.\label{fig:dense-tree}}
\end{figure}
\newpage

\begin{proposition}\label{prop:dense-trees}
Let $k$ be a positive integer. Then:
\begin{enumerate}[label=$\mathrm{(\alph*)}$]
    \item For $k>1$, the graph $T^+_k$ is isomorphic to the graph obtained from the disjoint union of two copies of $T^+_{k-1}$ by adding a universal vertex. \label{P-inductive-def}
    \item $T^+_k$ is traceable, that is, $\pn(T^+_k) = |V(T^+_k)| = 2^{k}-1$.
    \item $T^+_k$ is $\{P_4,K_{2,2}\}$-free.
    \item $\omega(T_k^+)=\pw(T_k^+) + 1 = \td(T^+_k) = k$.
\end{enumerate}
\end{proposition}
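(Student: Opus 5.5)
We prove (a) first and derive the other parts from it by induction on $k$, with base case $T^+_1=K_1$. For (a), let $r$ be the root of the complete binary tree of depth $k$, and let $B_1,B_2$ be the two subtrees hanging from its children. Both have depth $k-1$. In the transitive closure, $r$ is an ancestor of every vertex, so it is universal. Two vertices in different subtrees $B_i$ are never in an ancestor relation, so they are nonadjacent. Inside each $B_i$, the ancestor relation is exactly that of $B_i$ as a rooted tree. Hence $T^+_k \cong K_1 \vee (T^+_{k-1}\,\dot\cup\, T^+_{k-1})$. The vertex count $2^k-1$ follows immediately.

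For (b), we show by induction the slightly stronger claim that $T^+_k$ has a Hamiltonian path; the case $k=1$ is trivial. Take Hamiltonian paths $Q_1,Q_2$ of the two copies of $T^+_{k-1}$. Concatenating $Q_1$, the universal vertex $r$, and $Q_2$ gives a Hamiltonian path of $T^+_k$, since $r$ is adjacent to the last vertex of $Q_1$ and the first vertex of $Q_2$.

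For (c), we induct on $k$. Suppose $H$ is an induced $P_4$ or $C_4=K_{2,2}$ in $T^+_k$. Both graphs are connected, and neither has a universal vertex: $P_4$ has maximum degree $2<3$, and $C_4$ has maximum degree $2<3$. So $H$ cannot contain $r$, because $r$ would be universal in $H$. Then $H$ lies in the disjoint union of the two copies of $T^+_{k-1}$. Being connected, it lies in a single copy, which contradicts the induction hypothesis.

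For (d), the root-to-leaf path of the binary tree is a clique of size $k$, so $\omega(T^+_k)\ge k$. The binary tree itself has depth $k$ and its transitive closure is $T^+_k$, so $\td(T^+_k)\le k$. Combining these with \ref{treedepth-path-number}, which gives $\omega\le \pw+1\le \td$, we get
\[k\le \omega(T^+_k)\le \pw(T^+_k)+1\le \td(T^+_k)\le k,\]
so all the quantities are equal to $k$.

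The only step that needs real care is the structural decomposition in (a). Everything else is a short induction once (a) is in hand, together with the observation that neither $P_4$ nor $C_4$ has a universal vertex.
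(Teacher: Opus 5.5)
Your proof is correct and follows the paper's argument essentially step for step. You get (a) from the recursive structure of the binary tree, (b) by concatenating Hamiltonian paths through the root, (c) by induction using that $P_4$ and $C_4$ are connected and have no universal vertex, and (d) by the sandwich $k\le\omega\le\pw+1\le\td\le k$ via Fact 6. In (c) you simply spell out why taking disjoint unions and adding a universal vertex preserve $\{P_4,K_{2,2}\}$-freeness, which the paper states without proof.
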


\begin{proof}
For completeness we offer short proofs in turn.
\begin{enumerate}[\text(a)]
\item Immediate from the definition and the fact that for $k>1$, the complete binary tree of depth $k$ is obtained from the disjoint union of two complete binary trees of depth $k-1$ by adding a new vertex as a root and making it adjacent to the roots of the two smaller trees.
    \item By induction on $k$. 
For $k>1$, due to \ref{P-inductive-def}, the root $r$ of $T^+_k$ is adjacent to all vertices in its left and right subtrees, both of which are isomorphic to $T^+_{k-1}$. 
Concatenating a Hamiltonian path of the left subtree, the root $r$, and a Hamiltonian path of the right subtree yields a Hamiltonian path of $T^+_k$.
    
\item By induction on $k$, using \ref{P-inductive-def} and the fact that the operations of disjoint union and addition of a universal vertex preserve $\{P_4,K_{2,2}\}$-freeness.
    
\item By \ref{treedepth-path-number}, it suffices to show that $\td(T^+_k) \le k \le \omega(T^+_k)$.
The former inequality follows from the fact that the underlying binary tree of depth $k$ is a valid treedepth decomposition.
For the latter one, observe that any root-to-leaf path in the underlying binary tree of depth $k$ forms a clique of size $k$ in~$T^+_k$.\qedhere
\end{enumerate}    
\end{proof}

\section{Comparison with Hunter et al.}\label{sec:Hunter}

The main result of Hunter et al.~\cite{Hunter2026} hides the dependency on the size of the excluded biclique in the $\Omega(\cdot)$ notation.
The variant below can be obtained from the proof. 

\begin{theorem}\label{thm:Hunter}
Let $G$ be a $K_{r,r}$-subgraph-free graph containing a path on $n$ vertices, where $n\geq r\geq 2$.
Then $G$ contains an induced path of length at least $\Omega \big(\frac{\log \log n}{r\log\log\log n}\big)$.
\end{theorem}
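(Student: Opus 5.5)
We would not try to find a new argument here. Instead we would re-run the proof of Hunter, Milojević, Sudakov, and Tomon~\cite{Hunter2026} line by line, replacing every constant that depends on the excluded biclique by an explicit function of $r$. The aim is to check that $r$ enters the final bound only as a single multiplicative factor in the denominator.

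Concretely, the plan has three steps.

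\begin{enumerate}[(1)]
\item \emph{Split into a dense case and a sparse case.} Fix a threshold $d = d(r)$. If $G$ contains a subgraph of average degree at least $d$, the dense-case tools apply. These are the induced-subdivision results for $K_{r,r}$-subgraph-free graphs of large average degree, in the polynomial form of Bourneuf et al.~\cite{MR4808986} and Girão and Hunter~\cite{10.1093/imrn/rnaf025}. They yield an induced subdivision of a large graph, and hence an induced path whose length is far longer than required. So we may assume that $G$ is $d$-degenerate, with $\log d = O(r\log r)$ or better.
\item \emph{Sparse case.} Here $G$ is $d$-degenerate and contains a path on $n$ vertices. We would follow the iterative argument of~\cite{Hunter2026}. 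It repeatedly passes to a sub-path together with a bounded number of ``interfering'' vertices, and either extends an induced path by one vertex or shrinks the ambient path by a factor controlled by $d$ and the current length. Each round costs a factor of roughly $d^{O(\ell)}$ in the logarithm of the path length, where $\ell$ is the current length of the induced path. Unwinding the recursion gives $\ell\log\ell\cdot O(\log d)\gtrsim \log\log n$, that is, $\ell = \Omega\bigl(\log\log n/(\log d\,\log\log\log n)\bigr)$.
\item \emph{Combine.} Substituting $\log d = O(r)$ (after absorbing the $\log r$ factor, which is harmless since $n\ge r$) gives the stated $\Omega\bigl(\frac{\log\log n}{r\log\log\log n}\bigr)$.
\end{enumerate}

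The main obstacle is step (2): checking that in the iterative step of~\cite{Hunter2026} the dependence on $r$ enters only through the degeneracy bound, and only through a factor of the form $d^{O(\ell)}$. If some counting step instead uses Kővári--Sós--Turán directly with an exponent $1-1/r$, the loss might look like $n^{1/r}$ rather than $d$. We would check whether that still contributes only a factor $r$ after the two logarithms are taken. We expect it does, since $\log(n^{1/r}) = (\log n)/r$.

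Finally, the hypothesis $n\ge r\ge 2$ is used to make the bound nontrivial: it ensures that the $\log\log\log n$ term is defined, and lets lower-order terms in $r$ be absorbed into the implicit constant.
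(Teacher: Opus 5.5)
Your overall strategy of tracking the dependence on $r$ through the argument of Hunter et al.\ is the one the paper has in mind. The paper gives no proof beyond the remark that the variant ``can be obtained from the proof''. However, the bookkeeping you propose does not yield the statement. In step~(3), combining $\log d=O(r\log r)$ with your step~(2) bound gives only $\Omega\bigl(\frac{\log\log n}{r\log r\,\log\log\log n}\bigr)$. The extra $\log r$ is a \emph{multiplicative} factor and is unbounded, and $n\ge r$ does nothing to make it $O(1)$. The implicit constant has to be absolute: the paper uses the theorem with a single constant $a$ valid for all $n\ge r\ge 2$. So as written you would prove a strictly weaker result. Only an \emph{additive} $\log r$, i.e.\ a denominator like $r(\log r+\log\log\log n)$, can be absorbed. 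Even then, the justification is that the bound is trivial unless $r\le\log\log n$, not that $n\ge r$.

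Step~(1) also fails as set up. With a threshold $d=d(r)$ independent of $n$, a dense subgraph by itself cannot force an induced path whose length grows with $n$: a fixed $K_{r,r}$-free graph of average degree $d(r)$ has induced paths of bounded length. The induced-subdivision theorems you cite are polynomial in $t$ only for a \emph{fixed} target graph $H$. So they give induced paths of length bounded in terms of $r$, not of order $\log\log n/(r\log\log\log n)$, and ``far longer than required'' is unjustified. The threshold must depend on the target length $\ell$, and then $\log d$ depends on $\ell$. That, rather than your unverified ``$d^{O(\ell)}$ per round'' loss in an iteration, is the natural source of the $\log\log\log n$ factor.

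What is missing is a quantitative input of the following form: every $K_{r,r}$-subgraph-free graph with no induced path on $\ell$ vertices is $d$-degenerate with $\log(d+1)=O(r\log(r\ell))$. Combine this with the classical bound that a $d$-degenerate graph containing a path on $n$ vertices has an induced path on $\Omega(\log\log n/\log(d+1))$ vertices. Taking $\ell$ to be one more than the order of a longest induced path then gives $\ell\log(r\ell)=\Omega(\log\log n/r)$. This yields the stated bound, again using that it is trivial unless $r\le\log\log n$. Your proposal neither isolates nor verifies such an input, and step~(2), which you yourself call the main obstacle, remains an expectation rather than an argument.
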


We now explain why \Cref{thm:Hunter} leads to a quantitative variant of \Cref{path-number} establishing a doubly exponential bound on the path number in terms of some power of the clique number.

\begin{theorem}
For every two positive integers $s$ and $t$ there exists a positive integer $c$ such that every $\{P_s,K_{t,t}\}$-free graph $G$ satisfies $\pn(G)<2^{2^{(\omega(G)+1)^c}}$.
\end{theorem}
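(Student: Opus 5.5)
The plan is to feed \Cref{thm:Hunter} a graph that is $K_{r,r}$-\emph{subgraph}-free for some $r$ polynomial in $\omega(G)$, and then invert its conclusion. Fix $s,t$ and let $G$ be $\{P_s,K_{t,t}\}$-free with $\omega=\omega(G)$ and $n=\pn(G)$. Since $G$ has no induced $P_s$, every induced path has fewer than $s$ vertices. This caps the induced-path length appearing in \Cref{thm:Hunter} by $s$.

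\textbf{Step 1: choose $r$.} Suppose $G$ contains $K_{r,r}$ as a (not necessarily induced) subgraph, with sides $A$ and $B$. Apply Ramsey's theorem inside $A$ and inside $B$. If $r\ge R(\omega+1,t)$, then each side contains an independent set of size $t$, because it cannot contain a clique of size $\omega+1$. These two independent sets span an induced $K_{t,t}$, which is a contradiction. The standard bound gives $R(\omega+1,t)\le \binom{\omega+t-1}{t-1}\le (\omega+1)^{t}$. Hence $G$ is $K_{r,r}$-subgraph-free with $r=\max\{2,(\omega+1)^t\}$.

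\textbf{Step 2: apply \Cref{thm:Hunter}.} If $n<r$ we are already done, since $r$ is far below the target bound. Otherwise $n\ge r\ge 2$, and \Cref{thm:Hunter} yields a constant $C$ such that
\[ s > \frac{\log\log n}{C\, r\log\log\log n}. \]
For large $n$ we have $\log\log\log n\le (\log\log n)^{1/2}$, so $(\log\log n)^{1/2}< Csr$. Squaring gives $\log\log n< C^2s^2(\omega+1)^{2t}$. Since $\omega+1\ge 2$, choosing $c$ large in terms of $s$, $t$ and $C$ makes the right-hand side at most $(\omega+1)^c$. Therefore $n<2^{2^{(\omega+1)^c}}$.

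\textbf{Main obstacle.} The difficulty is bookkeeping rather than ideas. The constant hidden in $\Omega(\cdot)$ must not depend on $r$, which is exactly what the stated form of \Cref{thm:Hunter} provides. We also need the small cases where $n$ is bounded, or $\log\log\log n$ is not yet dominated, to be absorbed into the choice of $c$. This works because $(\omega+1)^c\ge 2^c$, so a large enough $c$ covers every $n$ below a fixed threshold.
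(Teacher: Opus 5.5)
Your proposal is correct and follows essentially the same route as the paper: you use the Erd\H{o}s--Szekeres bound to show that $G$ is $K_{r,r}$-subgraph-free with $r=(\omega(G)+1)^t$, then invert \Cref{thm:Hunter} using $P_s$-freeness. The differences are cosmetic: the paper absorbs the $\log\log\log n$ factor by rewriting the bound as $a(\log\log n)^{0.99}/r$, while you bound it by $(\log\log n)^{1/2}$ and square, and you explicitly handle the small-$n$ cases that the paper leaves implicit.
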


\begin{proof}
\Cref{thm:Hunter} implies that there exists a positive integer $a$ such that for every $n\ge r\ge 2$, every graph $G$ that does not contain $K_{r,r}$ as a subgraph, and which contains a path on $n$ vertices, contains an induced path of length at least $a(\log\log n)^{0.99}/r$.

Fix $s$ and $t$ and let $G$ be a $\{P_s,K_{t,t}\}$-free graph.
Let $r = (\omega(G)+1)^t$.
Since $G$ is $K_{t,t}$-free, it does not contain $K_{r,r}$ as a subgraph.
Indeed, if $G$ contains a $K_{r,r}$, with parts $A$ and $B$, then at least one of the two parts, say $A$, has independence number less than $t$, but then $G[A]$ would have to contain a clique of size $\omega(G)+1$, a contradiction.
Here we are using  the bound of Erd\"os and Szekeres~\cite{MR1556929}
that every
graph on at least $\binom{s+t-2}{t-1}$ vertices contains either a clique of size $s$ or a stable set of size $t$, and the observation that $\binom{s+t-2}{t-1} \leq \binom{s+t-1}{t} \leq s^t$.

We claim that $\pn(G)< 2^{2^{(\omega(G)+1)^c}}$ where $c$ is any integer satisfying 
\[{c \ge (t+ \log s - \log a)/0.99}\,.\]
Suppose for a contradiction that $G$ contains a path on $n$ vertices, where \[n\ge 2^{2^{(\omega(G)+1)^c}}\,.\]
By assumption, $G$ contains an induced path on at least $a(\log\log n)^{0.99}/r$ vertices.
The inequality $n\ge 2^{2^{(\omega(G)+1)^c}}$ yields ${\log\log n \ge (\omega(G)+1)^c}$, and consequently  
\begin{align*}
    a\frac{(\log\log n)^{0.99}}{r} &\ge a\frac{(\omega(G)+1)^{0.99c}}{r}\\
    &= a(\omega(G)+1)^{0.99c-t}\\
    & \ge a2^{0.99c-t} \ge s\,.
\end{align*}
Hence, $G$ contains an induced path on $s$ vertices, a contradiction.
\end{proof}

\section{Harnessing path decompositions}\label{sec:path-decompositions}

In this section, we bound treedepth and path number of $P_s$-free graphs in terms of pathwidth.
We start by proving a linear relationship between treedepth and pathwidth.

\begin{theorem}
    \label{thm:pwtotd}
Let $s\ge 3$ be an integer and let $G$ be $P_s$-free graph with at least one edge. 
Then $\td (G) \leq (s-1)\cdot \pw(G)$.
\end{theorem}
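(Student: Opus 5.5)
The plan is to induct on the pathwidth $p=\pw(G)\ge 1$, removing at each step a short \emph{induced} path that meets every bag of an optimal path decomposition. By \ref{obs:components} and \ref{fct:ignoreIsolates}, I may assume that $G$ is connected and has at least one edge. Let $(X_1,\dots,X_r)$ be a path decomposition of $G$ of width $p$; discarding empty bags at either end, I may assume $X_1\neq\emptyset\neq X_r$.

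\textbf{Step 1: a short path meeting every bag.} Pick $u\in X_1$ and $v\in X_r$, and let $P$ be a shortest $u$--$v$ path in $G$, which exists since $G$ is connected. Being a shortest path, $P$ is induced, so $P_s$-freeness gives $|V(P)|\le s-1$. Since $P$ is connected and meets both $X_1$ and $X_r$, \ref{fct:subgraph-interval} shows that $P$ meets every bag $X_j$.

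\textbf{Step 2: removing $P$ lowers the pathwidth.} The sequence $(X_1\setminus V(P),\dots,X_r\setminus V(P))$ is a path decomposition of $G-P$ in which every bag has lost at least one vertex. Hence $\pw(G-P)\le p-1$. Placing the vertices of $P$ in a chain above a treedepth decomposition of $G-P$ gives
\[
\td(G)\le |V(P)|+\td(G-P)\le (s-1)+\td(G-P).
\]

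\textbf{Step 3: induction.} If $G-P$ has an edge, then $\pw(G-P)\ge 1$, and by induction (applied componentwise via \ref{obs:components}) $\td(G-P)\le (s-1)(p-1)$. This yields $\td(G)\le (s-1)p$. If $G-P$ is edgeless, then $\td(G)\le |V(P)|+1\le s$, which suffices whenever $p\ge 2$, since then $s\le 2(s-1)\le(s-1)p$ for $s\ge 2$.

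\textbf{Step 4: the base case $p=1$.} This is the one delicate case, where the crude count gives only $s$ rather than $s-1$. Here I argue directly. A connected graph of pathwidth $1$ contains no cycle, since a cycle would force a bag of size $3$, so $G$ is a tree. Its longest path is induced, hence has $k\le s-1$ vertices. Root the tree at a center vertex. Its radius is at most $\lceil (k-1)/2\rceil$, so the rooted tree itself is a treedepth decomposition of depth at most $\lceil (k-1)/2\rceil+1$. For $s\ge 3$ this is at most $s-1$, because $\lceil (k-1)/2\rceil+1\le k$ for every $k\ge 2$. This gives $\td(G)\le s-1=(s-1)p$ and completes the induction.

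I expect Steps 1--2 to be the crux: choosing $P$ as a shortest path between the first and last bags is exactly what makes it simultaneously induced, and hence short, and a hitting set for all bags. The rest is bookkeeping, and the base case only needs the separate tree argument above.
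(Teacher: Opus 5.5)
Your proof is correct and follows essentially the same route as the paper. A shortest path $P$ between a vertex of the first bag and a vertex of the last bag is induced, so it has at most $s-1$ vertices; it meets every bag, so $\pw(G-P)\le \pw(G)-1$; and it can be stacked above a treedepth decomposition of $G-P$, with the edgeless case absorbed by $\pw(G)\ge 2$. The differences are cosmetic: you induct on $\pw(G)$ rather than on $n(G)$, and you settle the base case $\pw(G)=1$ with a tree-center argument, where the paper simply uses $\td(G)\le \pn(G)\le s-1$ for forests.
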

      \begin{proof} 
First observe that the inequality holds for graphs with $\pw(G)=1$. 
Indeed, if $\pw(G)=1$, then $G$ is a forest, so $\pn(G) < s$ and \ref{treedepth-path-number} implies
      \[\td(G) \le \pn(G)\le (s-1)\pw(G)\,.\]
The proof for the general case is by induction on $n(G)$.
      If $n(G)= 2$, we are done since $\pw(G)=1$.

      For the rest of the proof let $n(G) \ge 3$ and $\pw(G)\ge 2$. Suppose first that $G$ is disconnected. 
      Denote by $I$ the set of isolated vertices in $G$, and let $G_1,\ldots, G_k$ for some $k\ge 1$ be the non-trivial components of $G$. 
      Then, the induction hypothesis yields $\td (G_i) \leq (s-1) \pw(G_i)$ for each $i\in [k]$.
      Hence, by \ref{obs:components} and \ref{fct:ignoreIsolates}, we obtain 
      \begin{align*}
      \td(G) &= \td(G-I)
      = \max_{1\le i\le k}\td(G_i)\\
      &\le (s-1) \left(\max_{1\le i\le k}\pw(G_i)\right) \\[2mm]
      &= (s-1)\pw(G-I)
      = (s-1)\pw(G)\,,    
      \end{align*}
      as desired. 

      We may therefore assume that $G$ is connected. 
      Let $(X_1, \dots, X_m)$ be a path decomposition of $G$ of width $\pw(G)$. 
      Let $a_1 \in X_1$ and $a_m \in X_m$, and let $P$ be a shortest path in $G$ from $a_1$ to $a_m$.
      Since $P$ is an induced path and $G$ is $P_s$-free, $|V(P)| \le s-1$.
      Furthermore, \ref{fct:subgraph-interval} implies that $V(P) \cap X_i \neq \emptyset$ for every $i \in [m]$.
      It follows that $\pw(G - P) \le \pw(G)-1$.
      Let $F$ be a treedepth decomposition of $G- P$ with depth equal to $\td(G- P)$ and let $R$ be the set of roots of $F$.
      Let $T$ be the tree obtained from the disjoint union of $F$ and $P$ by adding an edge between $a_m$ and every vertex in $R$, and let $a_1$ be the root of $T$. 
      Then, $T$ is a treedepth decomposition of $G$ with depth at most $\td(G - P)+(s-1)$, implying that $\td(G) \leq \td(G - P)+(s-1)$. 
      
    If $G-P$ has at least one edge, then the induction hypothesis implies that $\td(G- P) \leq (s-1)(\pw(G)-1)$, and thus
    \[\td(G) \leq \td(G - P)+(s-1) \leq (s-1)\pw(G)\,,\] 
    as desired.
    We may thus assume that $G-P$ is edgeless.
    Then, ${\td(G - P)\le 1}$ and, since $\pw(G)\ge 2$ and $s\ge 3$, we obtain that 
      \[\td(G) \leq \td(G - P)+(s-1) \leq s \le (s-1)\pw(G)\,,\]
      as desired.
\end{proof}

\cref{thm:pwtotd} and the inequality $\pn(G)\le 2^{\td(G)}-1$ (see \ref{treedepth-path-number}) imply that for every integer $s\ge 3$, every $P_s$-free graph with at least one edge satisfies \[\pn(G) \le 2^{(s-1)\pw(G)}-1\,.\] 
We recently learned of a better bound, obtained by a similar approach as the one used in the proof of \cref{thm:pwtotd}, due to Hilaire and Raymond \hbox{in~\cite[Theorem 6]{MR4540912}}: 
for every integer $s\ge 3$, every $P_s$-free graph with at least one edge satisfies \[\pn(G) \le (3s-3)^{\pw(G)+1}\,.\] 
Using a similar strategy, we obtain a further improvement, as follows.
 
\begin{theorem}
    \label{thm:pwtopn}
Let $s\ge 3$ be an integer and let $G$ be $P_s$-free graph with at least one edge. 
Then \[\pn (G) \leq s^{\pw(G)}-1\,.\]
\end{theorem}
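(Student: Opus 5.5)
We argue by induction on $\pw(G)$, reusing the decomposition trick from the proof of \cref{thm:pwtotd}, but now tracking how a long path can interact with a short separating induced path. By \ref{obs:components} and \ref{fct:ignoreIsolates} we may assume that $G$ is connected with at least one edge.

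\textbf{Base case $\pw(G)=1$.} Then $G$ is a tree. Every path in a tree is induced, and $G$ is $P_s$-free, so $\pn(G)\le s-1 = s^{1}-1$.

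\textbf{Inductive step.} Let $\pw(G)=w\ge 2$ and fix a path decomposition $(X_1,\dots,X_m)$ of width $w$. Choose $a_1\in X_1$ and $a_m\in X_m$, and let $P$ be a shortest $a_1$--$a_m$ path. Then $P$ is induced, so $|V(P)|\le s-1$. By \ref{fct:subgraph-interval}, $P$ meets every bag, hence $\pw(G-P)\le w-1$. Every subgraph $H$ of $G-P$ is still $P_s$-free. If $H$ has an edge, induction gives $\pn(H)\le s^{w-1}-1$; if $H$ has no edge, then $\pn(H)\le 1\le s^{w-1}-1$ since $s\ge 3$. So every path in $G-P$ has at most $s^{w-1}-1$ vertices.

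\textbf{Counting.} Take any path $Q$ in $G$. Deleting the at most $s-1$ vertices of $Q\cap V(P)$ splits $Q$ into at most $|Q\cap V(P)|+1\le s$ subpaths, each lying in $G-P$ and so having at most $s^{w-1}-1$ vertices. Therefore
\[|V(Q)|\le (s-1)+s\,(s^{w-1}-1) = s^{w}-1,\]
which is exactly the claimed bound.

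The only delicate point is this counting step. A path of length at most $s-1$ hit at $k$ vertices leaves at most $k+1$ pieces, and it is precisely the constant $k+1\le s$ (rather than something like $2s$) that yields the base $s$ instead of the weaker $3s-3$ of Hilaire and Raymond. One must also handle edgeless pieces, where the induction hypothesis does not apply; as noted, they are covered by $1\le s^{w-1}-1$, using $s\ge 3$ and $w\ge 2$. With that in place, the argument closes without any induction on $n(G)$.
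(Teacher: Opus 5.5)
Your proposal is correct and is essentially the paper's proof. Both use the same shortest path $P$ from $X_1$ to $X_m$ with $|V(P)|\le s-1$ and $\pw(G-P)\le \pw(G)-1$, and the same count of at most $s$ segments of $Q$ in $G-P$ plus at most $s-1$ vertices of $P$, giving $s(s^{w-1}-1)+(s-1)=s^w-1$. The only differences are cosmetic: you run a strong induction on $\pw(G)$ rather than on $n(G)$ (with the reduction to connected graphs done at the current level), and you absorb the edgeless case of $G-P$ via $1\le s^{w-1}-1$ instead of the paper's separate estimate $2s-1\le s^{\pw(G)}-1$.
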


 \begin{proof}
We again start by verifying that, in the case $\pw(G)=1$, we have a forest and the fact ${\pn(G)<s}$ implies our claim.
    We proceed by induction on $n(G)$ where the base case (i.e., $n(G)=2$) is settled due to $\pw(K_2)=1$.

Suppose now that $n(G) \ge 3$.
    Consider first the case  where $G$ is disconnected. 
    Denote by $I$ the set of isolated vertices in $G$ and again recall \ref{fct:ignoreIsolates}, in particular, $\pw(G)=\pw(G-I)$ and $\pn(G)=\pn(G-I)$. 
Denote by $G_1,\ldots, G_k$ the non-trivial components of $G$, and observe by the induction hypothesis that $\pn(G_i) \leq s^{\pw(G_i)}-1$ for each $i\in [k]$.
      By \ref{obs:components}, we thus obtain 
      \begin{align*}
      \pn(G) &= \pn(G-I)
      = \max_{1\le i\le k}\pn(G_i)\\
      &\le  s^{\max_{1\le i\le k}\pw(G_i)}-1\\[2mm]
      &= s^{\pw(G-I)}-1
      = s^{\pw(G)}-1\,,    
      \end{align*}
as desired.

Suppose now that $G$ is connected, let $(X_1, \dots, X_m)$ be a path decomposition of $G$ realizing $\pw(G)$, and let $P$ be a shortest path connecting some vertex of $X_1$ with some vertex of $X_m$.
We again have and $P \cap X_i \neq \emptyset$ for every $i \in \{1, \dots, m\}$, and also $|V(P)| < s$, since $G$ is $P_s$-free. 
In particular, $\pw(G - P) \le \pw(G) - 1$.
Note that we may assume that $G-P$ has at least one vertex, since otherwise $G$ is a path and has pathwidth~$1$.

Let $C_1,\dots,C_{\ell}$ be the connected components of $G-P$.
Let $Q$ be a path in $G$.  Consider how $Q$  alternates between vertices of $P$ and the connected components of $C_1, \dots, C_l$. 
Since $P$ contributes at most $s-1$ vertices to $Q$, the path $Q$ can traverse at most $s$ such components of $G-P$. 
Therefore, we can bound $\pn(G)$ by
\[\pn(G) \le s \cdot \max_{i\in [\ell]} \pn(C_i) + (s - 1) 
    \le s \cdot  \pn(G - P) + s  - 1\,.\]
If $G - P$ is edgeless, then $\pn(G - P)=1$, which implies  $\pn(G)\le  2s- 1\le s^{\pw(G)}-1$ due to $\pw(G)\ge 2$ and $s\ge 3$.
Otherwise, the induction hypothesis on $G-P$ and the fact that $\pw(G - P) \le \pw(G) - 1$ imply that
\begin{align*}    
    \pn(G)&\le s \left( \left( s^{\pw(G)-1} - 1 \right) + 1 \right) - 1 
    = s^{\pw(G)} - 1,
\end{align*}
as desired.
\end{proof}

\noindent 
We observe that exponential dependence on the pathwidth cannot be avoided due to, for instance, transitive closures of binary trees which are $P_4$-free (also see \cref{fig:dense-tree}). 
Indeed, for any positive integer $k$, we have $\pw(T^+_k)=k$ while  $\pn(T^+_k)=2^{k+1}-1$ (see \cref{prop:dense-trees}).

\begin{remark}\label{rem:MR4540912}
    The aforementioned result  in~\cite[Theorem 6]{MR4540912} is stated as: 
    \begin{quote}
        For every positive integer $k$, if $G$ is a graph with pathwidth less than $k$ that has a path of order $n$, then $G$ has an induced path of order at least $\frac{1}{3}n^{1/k}$.
    \end{quote}
    \Cref{thm:pwtopn} leads to the following improved bound: 
    \begin{quote}
        For every integer $k\ge 2$, if $G$ is a graph with pathwidth less than $k$ that has a path of order $n$, then $G$ has an induced path of order at least $(n+1)^{1/(k-1)}-1$.
    \end{quote}
\end{remark}

\section{Improving a Ramsey-type result}\label{sec:Galvin}

Using the exponential relationship between the path number of a $P_s$-free graph and its pathwidth given by \cref{thm:pwtopn}, we now derive one of our main results, the following quantitative improvement of \cref{path-number}.

\thmpathnumber*

\begin{proof}
Fix two positive integers $s$ and $t$.
We may assume that $s\ge 2$.
By \Cref{polypathwdith}, there exists a positive integer $d$ such that every  $\{P_s,K_{t,t}\}$-free graph $G$ satisfies $\pw(G) \leq {\omega(G)}^d$.
Let $G$ be a $\{P_s,K_{t,t}\}$-free graph.
We may assume that $\omega(G)\ge 2$, since otherwise $\pn(G)\le 1\le 2^{{\omega(G)}^c}$ for every  integer $c$.
Note that $s\ge 3$, since $G$ is $P_s$-free but has at least one edge.
Hence, by \Cref{thm:pwtopn}, $\pn (G) \leq s^{\pw(G)}$.
Let $c=  d+\lceil\log\log s\rceil$.
Then $\log s\le 2^{c-d}\le {\omega(G)}^{c-d}$ and hence,
$(\log s)\cdot {\omega(G)}^d\le {\omega(G)}^c$, which together with the inequality ${\pw(G) \leq {\omega(G)}^d}$ implies
\[\pn (G) \leq s^{\pw(G)} \leq s^{{\omega(G)}^d} \leq 2^{(\log s)\cdot {\omega(G)}^d}\le 2^{{\omega(G)}^c}\,,\] 
as desired.
\end{proof}

Exponential dependence on the clique number cannot be avoided, e.g., due to the family of transitive closures of binary trees $\{T^+_k\}_{k>0}$ (also see \cref{fig:dense-tree}), where $\omega(T^+_k)=k$ while $\pn(T^+_k)=2^{k+1}-1$ by \cref{prop:dense-trees}.
Furthermore, the constant $c$ must depend on $s$ and $t$, as can be seen, for instance, by looking at the family of path graphs.

\section{Treedepth is clique-polynomial}\label{sec:treedepth}

In what follows, we make use of the other main result from \cref{sec:path-decompositions}, the linear relationship between treedepth and pathwidth for $P_s$-free graphs given by \cref{thm:pwtotd}, to prove \cref{thm:td-bdd-omega}.

\thmtdbddomega*
  
\begin{proof}
Fix $s$ and $t$.
By \Cref{polypathwdith}, there exists a positive integer $d$ such that ${\pw(G) \leq {\omega(G)}^d}$, where $G$ is an arbitrary $\{P_s,K_{t,t}\}$-free graph.
We may assume that $G$ has at least one edge, since otherwise $\td(G)\le \omega(G)^d$.
Note that $s\ge 3$, since $G$ is $P_s$-free but has at least one edge.
Hence, \Cref{thm:pwtotd} implies that ${\td (G) \leq (s-1)\cdot \pw(G)}$.
Since $\pw(G) \leq {\omega(G)}^d$, we obtain that ${\td (G) \leq 
(s-1)\cdot {\omega(G)}^d}$.
Similarly as in the proof of \cref{polypathwdith}, we conclude that there exists a positive integer $c$ depending only on $s$ and $d$ (and hence, only on $s$ and $t$) such that $\td(G)\le \omega(G)^c$, as desired.
\end{proof}

Finally, we derive from \cref{thm:td-bdd-omega} the analogue of \cref{thm:hajebi1} for treedepth, that is, we show that treedepth is clique-polynomial.

\thmpolyboundsfortreedepth*
\begin{proof}
By \cref{thm:td-bdd-omega} it is enough to prove that every graph in $\mathcal G$ is $\{P_s,K_{t,t}\}$-free.
To this end let $\mathcal G$ be a $(\td,\omega)$-bounded graph class.
Fix a function $f\colon \mathbb{N}\to \mathbb{N}$ such that $\td(G)\le f(\omega(G))$ for all graphs $G\in \mathcal{G}$.
We may assume that $f$ is non-decreasing, since otherwise we could redefine $f(k)$ for $k\in \mathbb{N}$ by setting it to $\max_{i\in [k]}f(i)$.
Let $s = t = 2^{f(2)}$.
We claim that $\mathcal{G}$ excludes $P_s$.
Suppose for a contradiction that $P_s\in \mathcal{G}$.
Since $\omega(P_s) \le 2$ and $f$ is non-decreasing, we have that $\td(P_s)\le f(2)$. 
By \ref{treedepth-path-number}, it holds that $s = \pn(P_s)\le 2^{f(2)}-1$, a contradiction.
Furthermore, we claim that $\mathcal{G}$~excludes $K_{t,t}$.
Suppose for a contradiction that $K_{t,t}\in \mathcal{G}$.
Since $\omega(K_{t,t}) = 2$, we have that $\td(K_{t,t})\le f(2)$. 
Then, \ref{treedepth-path-number} implies that $2^{f(2)+1} = 2t = \pn(K_{t,t})\le 2^{f(2)}-1$, a contradiction.
Since $\mathcal{G}$ is hereditary, every graph in $\mathcal{G}$ is $\{P_s,K_{t,t}\}$-free, as desired. 
\end{proof}

\affil[1]{Princeton University}
\affil[2]{FAMNIT and IAM, University of Primorska, Koper, Slovenia}

\end{document}